\documentclass[11pt]{article}


\usepackage{amssymb,latexsym}
\usepackage{amsmath,amscd}
\usepackage{theorem}
\usepackage{url}
\usepackage{bm} 
\usepackage{graphics}
\usepackage{color}
\usepackage{soul}


\setlength{\topmargin}{0.4 cm}
\setlength{\parindent}{16pt}
\setlength{\textwidth}{16cm}
\setlength{\textheight}{22.6cm}
\setlength{\hoffset}{-1.6cm}
\setlength{\voffset}{-1.7cm}

\setlength{\footskip}{37pt} 



\title{\bf On the self-similarity index\\ of $\bm{p}$-adic analytic 
pro-$\bm{p}$ groups}

\author{
        Francesco Noseda  
        \\[0.1cm]
        Ilir Snopce \thanks{Supported by CNPq,
        FAPERJ and the Alexander von Humboldt Foundation.}
}

\date{}


\newcommand{\bb}[1]{\mathbb{#1}}

\newcommand{\mr}[1]{\mathrm{#1}}



\newcommand{\rar}{\rightarrow}


\newcommand{\gen}[1]{\langle #1 \rangle}


\newcommand{\vep}{\varepsilon}

\newcommand{\les}{\leqslant}
\newcommand{\ges}{\geqslant}

\newcommand{\sylow}{\scalebox{0.5}{\mbox{\ensuremath{\displaystyle \triangle}}}}

\newcommand{\bs}{\backslash}
\newcommand{\ep}{\hfill $\square$} 


\newtheorem{lemma} {Lemma} 

\newtheorem{theorem} [lemma] {Theorem}
\newtheorem{corollary} [lemma] {Corollary}

\theorembodyfont{\rm} 

\newtheorem{example}[lemma] {Example}
\newtheorem{remark}[lemma]{Remark}

\newenvironment{proof}{{\sc Proof:}}{
\hfill $\square$}



\numberwithin{equation}{section}


\theorembodyfont{\it}





\usepackage{array}
\usepackage[colorlinks]{hyperref}


\begin{document} 

\maketitle

\begin{abstract}
Let $p$ be a prime. We say that a pro-$p$ group is self-similar of index $p^k$ 
if it admits a faithful self-similar action on a $p^k$-ary regular rooted tree
such that the action is transitive on the first level. The self-similarity index  of a self-similar pro-$p$ group $G$ is defined to be the least power of $p$, say $p^k$,  such that $G$ is self-similar of index $p^k$.
 We show that for every prime $p\ges 3$ and all integers $d$  there exist infinitely many
pairwise non-isomorphic self-similar  3-dimensional hereditarily just-infinite uniform  pro-$p$ groups of
 self-similarity index greater than $d$. 
 This implies that, in general, for self-similar $p$-adic analytic pro-$p$ groups
one cannot bound the self-similarity index 
 by a function that depends only on the dimension of the group.

\end{abstract}
{
\let\thefootnote\relax\footnotetext{\textit{Mathematics Subject Classification (2020): }
Primary 20E18, 22E20; Secondary 22E60.}
\let\thefootnote\relax\footnotetext{\textit{Key words:} self-similar group,
$p$-adic analytic group, pro-$p$ group, $p$-adic Lie lattice.}
}



\section*{Introduction}

The class of groups that admit faithful self-similar actions on regular 
rooted trees  contains many important and interesting examples, 
such as the Grigorchuk 
2-group \cite{Gri80}, the Gupta-Sidki $p$-groups \cite{GuSi83}, 
and
groups obtained as iterated monodromy groups of self-coverings 
of the Riemann sphere by post-critically finite rational maps \cite{NekSSgrp}.
Over the last 15 years there has been an intensive study on the self-similar actions of other 
important families of groups including 
finitely generated nilpotent groups \cite{BeSi07},  arithmetic groups \cite{Ka12}, and 
groups of type $\mr{FP}_n$ \cite{KoSi20}. 
Self-similar actions of some classes of finite $p$-groups were studied 
in \cite{Su11} and \cite{BaFaFeVa20}.  
For the definition of self-similar action, as well as for other examples,
the reader may consult, for instance, \cite{NekSSgrp}.
We say that a group $G$ is \textbf{self-similar of index} $\bm{d}$ 
if $G$ admits a faithful self-similar action on
a $d$-ary regular rooted tree
such that the action is transitive on the first level; moreover,
we say that $G$ is \textbf{self-similar} if it is self-similar of 
some index $d$. The \textbf{self-similarity index} $\sigma(G)$  of a self-similar group $G$ 
is defined to be the least positive integer $d$ such that $G$ is self-similar of index $d$.
In case $G$ is not self-similar we put $\sigma(G)=\infty$.

Throughout the paper, let $p$ be a prime. Observe that if a pro-$p$ group is self-similar of
index $d$ then $d$ is a power of $p$.
In \cite{NS2019} 
we initiated the study of self-similar actions of $p$-adic analytic pro-$p$ groups. 
For the rest of this paragraph, assume that $p\ges 5$.
In \cite{NS2019} we classified the 
3-dimensional \textit{unsolvable} torsion-free $p$-adic analytic pro-$p$ groups,
and we determined which of them are self-similar of index $p$. 
On the other hand, in \cite{NShered} we determined which 3-dimensional \textit{solvable}
torsion-free $p$-adic analytic pro-$p$ groups are self-similar of index $p$. Moreover, 
we established that every 3-dimensional \textit{solvable}
torsion-free $p$-adic analytic pro-$p$ group
is self-similar of index $p^2$; as a consequence, the self-similarity index of any such group
is less or equal to $p^2$. 
It is worth mentioning that the results of \cite{NS2019}
do not exclude the possibility that the self-similarity index of \textit{self-similar} 
3-dimensional unsolvable torsion-free $p$-adic analytic pro-$p$ groups
is bounded; indeed, they do not even exclude the possibility that this bound is $p^2$. 

\medskip 

In this short note we consider the following question.

\medskip

\noindent \textbf{Question (*).}
Is the self-similarity index of a \textit{self-similar} $p$-adic analytic pro-$p$ group 
bounded by a function that only depends on the dimension?
\\

\vspace{0mm}
\noindent

Let $G$ be a torsion-free unsolvable $p$-adic analytic pro-$p$ group of dimension 3. 
Then $G$ has finite abelianization, so there exist unique
numbers
$s_0,s_1,s_2\in \bb{N}$ such that $s_0 \les s_1 \les s_2$ and
$G/[G,G]\simeq \bigoplus_{i=0}^2 (\bb{Z}/p^{s_i}\bb{Z})$; these numbers are  called the $\bm{s}$\textbf{-invariants} 
of $G$. 
For such $G$ we define $k(G)\in\bb{N}$ by
$$  k(G):= \left\lceil \mr{min}\left( \frac{s_1-s_0}{2},\frac{s_2-s_1}{2} \right)\right\rceil. $$
The following theorem, which is the main result of this paper, improves significantly
the lower bound for the self-similarity indices
of 3-dimensional unsolvable 
torsion-free
$p$-adic analytic pro-$p$ groups given in \cite{NS2019}.
The main ingredient in the proof of this theorem is an analogous result for $\bb{Z}_p$-Lie lattices
(Theorem \ref{nsshigh}) which is interesting on its own. 

\begin{theorem}\label{main-group}
Let $p\ges 3$ be a prime, and let $G$ be a saturable unsolvable $p$-adic analytic pro-$p$ group of dimension 3. 
Then $\sigma(G) \ges p^{k(G)}$.
\end{theorem}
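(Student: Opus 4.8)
The plan is to reduce the group statement to its Lie-lattice analogue, Theorem \ref{nsshigh}, via the saturable correspondence, and then to argue by contradiction. Since $G$ is saturable, the logarithm produces a $\bb{Z}_p$-Lie lattice $L=\log G$ which is unsolvable of dimension $3$; because $\log$ induces an isomorphism $G/\overline{[G,G]}\simeq L/[L,L]$ of finite abelian $p$-groups (carrying $\overline{[G,G]}$ to $[L,L]$), the $s$-invariants of $G$ and $L$ coincide, so $k(G)=k(L)$. It therefore suffices to show that $L$ admits no self-similar structure of index smaller than $p^{k(L)}$ and to transfer this conclusion back to $G$.

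First I would record the algebraic description of self-similarity in the pro-$p$ setting (as in \cite{NS2019}): $G$ is self-similar of index $p^j$ with first-level-transitive action if and only if there exist an open subgroup $H\les G$ with $[G:H]=p^j$ and a continuous homomorphism $f:H\ra G$ whose \emph{$f$-core} — the largest subgroup $N$ of $H$ that is normal in $G$ and satisfies $f(N)\les N$ — is trivial. Assume, for contradiction, that $G$ is self-similar of index $p^j$ for some $j<k(G)$, and fix such a pair $(H,f)$. I would then translate $(H,f)$ through the correspondence: the open subgroup $H$ should correspond to a sublattice $M=\log H\les L$ with $[L:M]=[G:H]=p^j$, the homomorphism $f$ to a Lie-lattice homomorphism $\varphi=\log\circ f\circ\exp:M\ra L$, the closed subgroups of $H$ normal in $G$ to the ideals of $L$ contained in $M$, and the $f$-invariance condition $f(N)\les N$ to $\varphi(\mfr{n})\subseteq\mfr{n}$. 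Consequently, triviality of the $f$-core of $H$ forces triviality of the \emph{$\varphi$-core} of $M$ (the largest ideal $\mfr{n}$ of $L$ contained in $M$ with $\varphi(\mfr{n})\subseteq\mfr{n}$), so that $L$ carries a self-similar structure of index $p^j$. Theorem \ref{nsshigh} then yields $j\ges k(L)=k(G)$, contradicting $j<k(G)$; hence $\sigma(G)\ges p^{k(G)}$.

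The main obstacle is making the translation rigorous, that is, establishing the functoriality of the saturable correspondence at the level of the self-similarity data. Three points require care: (i) that the open subgroup $H$ again lies in the correspondence and that $\log H$ is a sublattice of the same index $p^j$ — this uses that finite-index subgroups of $G$ remain saturable in the present range of $p$ and dimension, or a suitable replacement argument; (ii) that continuous homomorphisms $H\ra G$ correspond bijectively to Lie-lattice homomorphisms $M\ra L$ through $\log$ and $\exp$; and (iii), the most delicate, that the assignment $N\mapsto\log N$ sets up an inclusion-preserving bijection between the $G$-normal, $f$-invariant subgroups of $H$ and the $L$-ideals, $\varphi$-invariant sublattices of $M$, sending the trivial subgroup to the trivial sublattice. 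Point (iii) is precisely what guarantees that triviality of the $f$-core is equivalent to triviality of the $\varphi$-core, and hence that the lower bound of Theorem \ref{nsshigh} transfers verbatim to $G$.
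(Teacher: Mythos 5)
Your overall route is the same as the paper's: pass to the associated $\bb{Z}_p$-Lie lattice via the saturable (Lazard) correspondence, match the $s$-invariants through the isomorphism $G/\overline{[G,G]}\simeq L/[L,L]$, and invoke Theorem \ref{nsshigh}. The translation of the self-similarity data that you flag as the main obstacle (your points (i)--(iii)) is exactly the content of \cite[Proposition A]{NS2019}, which gives the equality $\sigma(G)=\sigma(L_G)$ outright; the paper simply cites it, so there is no need to reprove it, and your sketch is consistent with how that proposition is established.

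There is, however, one hypothesis of Theorem \ref{nsshigh} that you never verify: that $L$ admits a diagonalizing basis. As stated, that theorem cannot be applied without it, and this is precisely where the assumption $p\ges 3$ enters: by Remark \ref{rdiag} (i.e.\ \cite[Proposition 2.7]{NS2019}), every 3-dimensional unsolvable $\bb{Z}_p$-Lie lattice admits a diagonalizing basis when $p\ges 3$. Adding that one verification (and, for completeness, a reference for the unsolvability of $L_G$, e.g.\ \cite[Theorem B(4)]{GSpsat}, which you assert without justification) closes the argument.
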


\noindent 
A finitely generated pro-$p$ group is saturable if it admits 
a certain type of valuation map; 
for precise details we refer to \cite[Section 3]{GSpsat}.
Saturable groups, which were introduced by Lazard \cite{Laz65}, 
play a central role in the theory of $p$-adic analytic groups:
a topological group
is $p$-adic  analytic if and only if it contains an open finitely generated
pro-$p$ subgroup which is saturable \cite[Sections III(3.1) and III(3.2)]{Laz65}.

\medskip 

The following corollary is a direct consequence of Theorem \ref{main-group} 
and the fact that  any torsion-free $p$-adic analytic pro-$p$ group $G$ of dimension
$\mr{dim}(G)<p$ is saturable (see \cite[Theorem A]{GSKpsdimJGT}).

\begin{corollary}
Let $p\ges 5$ be a prime, and let $G$ be a torsion-free unsolvable $p$-adic analytic pro-$p$ group of dimension 3. 
Then $\sigma(G) \ges p^{k(G)}$.
\end{corollary}

We denote by $SL_2^{\sylow}(\bb{Z}_p)$ and $SL_2^{1}(\bb{Z}_p)$, respectively,
a Sylow pro-$p$ subgroup and the first congruence subgroup of $SL_2^{}(\bb{Z}_p)$.
The next corollary is a consequence of \cite[Theorem C]{NS2019} and Theorem \ref{main-group}.

\begin{corollary}
Let $p$ be a prime, and
let either $p\ges 5$ and $G$ be an open subgroup of $SL_2^{\sylow}(\bb{Z}_p)$,
or $p\ges 3$ and $G$ be an open subgroup of $SL_2^{1}(\bb{Z}_p)$.
Then $G$ is self-similar of self-similarity index $\sigma(G) \ges p^{k(G)}$.
\end{corollary}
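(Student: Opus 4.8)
The plan is to deduce this corollary directly from the two results it cites, \cite[Theorem C]{NS2019} and Theorem \ref{main-group}, with essentially no new computation. The statement has two parts: first that each such $G$ is self-similar (so that $\sigma(G)$ is finite and the symbol $\sigma(G)$ makes sense), and second the lower bound $\sigma(G)\ges p^{k(G)}$. I would handle these separately, since they come from different inputs.

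For the self-similarity itself, I would invoke \cite[Theorem C]{NS2019}, which presumably establishes that the relevant groups---open subgroups of a Sylow pro-$p$ subgroup $SL_2^{\sylow}(\bb{Z}_p)$ for $p\ges 5$, and open subgroups of the first congruence subgroup $SL_2^{1}(\bb{Z}_p)$ for $p\ges 3$---are self-similar of index $p$, hence self-similar. This immediately gives $\sigma(G)<\infty$. The main point to check here is that the hypotheses of Theorem C are met by every open subgroup $G$ in the two families; I expect Theorem C to be stated for exactly these groups, so this is a matter of quotation rather than verification.

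For the lower bound, the plan is to apply Theorem \ref{main-group}. To do so I must verify its hypotheses: that $G$ is saturable, unsolvable, $p$-adic analytic, and of dimension $3$. Dimension $3$ and $p$-adic analyticity are clear since $SL_2(\bb{Z}_p)$ is $2$-dimensional as a $\bb{Q}_p$-analytic group but its pro-$p$ subgroups here are $3$-dimensional as pro-$p$ groups---more precisely, $SL_2^1(\bb{Z}_p)$ and the Sylow pro-$p$ subgroup are open in $SL_2(\bb{Z}_p)$, which has dimension $3$, and openness preserves dimension. Unsolvability follows because $SL_2(\bb{Z}_p)$ is unsolvable and any open subgroup of an unsolvable $p$-adic analytic group is again unsolvable (an open subgroup has finite index, and a finite-index subgroup of a non-virtually-solvable group is non-solvable). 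Saturability is the one condition requiring care: for $p\ges 5$ it follows from the corollary preceding this one, since these groups are torsion-free of dimension $3<p$; for $p\ges 3$ with the congruence subgroup one uses that $SL_2^1(\bb{Z}_p)$ is the standard uniform (hence saturable) congruence subgroup and that open subgroups inherit saturability in the relevant range. With the hypotheses verified, Theorem \ref{main-group} yields $\sigma(G)\ges p^{k(G)}$ directly.

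The main obstacle, and the only genuinely delicate step, is confirming saturability for every open $G$ in the stated ranges, particularly in the boundary case $p=3$, where the dimension $3$ is not strictly less than $p$ and so the clean criterion of \cite[Theorem A]{GSKpsdimJGT} does not apply; here one relies instead on the explicit structure of $SL_2^1(\bb{Z}_3)$ and the fact that it and its open subgroups are saturable. Everything else is bookkeeping: combining $\sigma(G)<\infty$ from Theorem C with the bound $\sigma(G)\ges p^{k(G)}$ from Theorem \ref{main-group} gives the stated conclusion, namely that $G$ is self-similar with $\sigma(G)\ges p^{k(G)}$.
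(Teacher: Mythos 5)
Your proposal is correct and matches the paper's (implicit) argument exactly: the paper offers no written proof of this corollary, merely noting that it follows from \cite[Theorem C]{NS2019} (which supplies self-similarity, hence $\sigma(G)<\infty$) together with Theorem \ref{main-group} (which supplies the lower bound once saturability, unsolvability and dimension $3$ are checked), and you correctly identify saturability in the case $p=3$ as the one point not covered by the clean criterion of \cite[Theorem A]{GSKpsdimJGT}. The only blemish is your passing claim that $SL_2(\bb{Z}_p)$ is $2$-dimensional as a $\bb{Q}_p$-analytic group --- it is $3$-dimensional, as your own ``more precisely'' clause then correctly states --- but this slip does not affect the argument.
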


A pro-$p$ group is said to be powerful if $[G,G]\les G^{p^\vep}$,
where $\vep=1$ when $p\ges 3$,
$\vep = 2$ when $p=2$, and $G^{p^\vep}$ is the closure of the subgroup of $G$ generated
by the ${p^\vep}$-th powers of the elements of $G$. A powerful pro-$p$ group is called uniform 
if it is finitely generated and torsion-free.
We point out that uniform pro-$p$ groups are saturable.
A pro-$p$ group $G$ is said to be just infinite if it is infinite and any nontrivial normal closed subgroup
of $G$ has finite index in $G$; moreover, if every open subgroup of $G$ is just infinite
then $G$ is called hereditarily just infinite.

\medskip

The following corollary provides a negative answer to Question (*).

\begin{corollary}\label{cor4} 
For all primes $p\ges 3$ and all integers $d$, there exist infinitely many
pairwise non-isomorphic self-similar 3-dimensional hereditarily just-infinite 
uniform pro-$p$ groups of
self-similarity index greater than $d$.
\end{corollary}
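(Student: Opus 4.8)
The plan is to derive Corollary \ref{cor4} from Theorem \ref{main-group} by constructing, for each prescribed integer $d$, an infinite family of groups that simultaneously meet all the required properties. The key numerical observation is that the lower bound $\sigma(G)\ges p^{k(G)}$ grows without limit as the $s$-invariants spread apart: if I can produce uniform pro-$p$ groups $G$ whose $s$-invariants $s_0\les s_1\les s_2$ satisfy $\min\big((s_1-s_0)/2,(s_2-s_1)/2\big)$ arbitrarily large, then $k(G)$ exceeds any fixed value and hence $\sigma(G)>d$. So the main task is not the index bound itself (which Theorem \ref{main-group} hands us) but the \emph{existence} of a rich supply of \emph{uniform}, \emph{unsolvable}, \emph{hereditarily just-infinite}, $3$-dimensional pro-$p$ groups with controllable and spreadable $s$-invariants.

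First I would realize such groups via $\bb{Z}_p$-Lie lattices, exploiting the Lazard correspondence between saturable/uniform pro-$p$ groups and $\bb{Z}_p$-Lie lattices of the same dimension. The natural source of $3$-dimensional unsolvable lattices is $\mfr{sl}_2(\bb{Z}_p)$ and, more flexibly, the congruence-type or twisted lattices inside $\mfr{sl}_2(\bb{Q}_p)$ obtained by rescaling the standard basis $e,f,h$ by powers of $p$. Concretely, for nonnegative integers $a\les b$ one considers the lattice $L_{a,b}$ spanned by $p^a e$, $p^b f$, and $h$ (or a similar prescription), which is a Lie subring precisely when the bracket relations close up $p$-adically; the abelianization $L/[L,L]$ of such a lattice is a finite abelian $p$-group whose elementary divisors are governed by $a$ and $b$, and these in turn determine the $s$-invariants of the associated uniform group through the Lazard correspondence (which preserves the abelianization up to the exponential/logarithm bookkeeping). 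By choosing $a$ and $b$ to grow and to be suitably separated, I can force the gaps $s_1-s_0$ and $s_2-s_1$ to be as large as desired, hence $k(G)>d$.

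Next I would verify the remaining qualitative properties. \emph{Unsolvability} is immediate because each $L_{a,b}$ is commensurable with $\mfr{sl}_2(\bb{Z}_p)$ inside $\mfr{sl}_2(\bb{Q}_p)$, so the corresponding group is an open-by-finite subgroup of an $SL_2$-type group and therefore unsolvable; in particular it is saturable, so Theorem \ref{main-group} applies and gives $\sigma(G)\ges p^{k(G)}>p^{d}\ges d$. For \emph{hereditarily just-infinite}: a $p$-adic analytic pro-$p$ group is (hereditarily) just-infinite exactly when its $\bb{Q}_p$-Lie algebra is (semi)simple, equivalently has no proper nonzero ideals that survive in open subgroups; since $L_{a,b}\otimes\bb{Q}_p\cong\mfr{sl}_2(\bb{Q}_p)$ is simple, every open subgroup of the resulting uniform group has the same simple Lie algebra, yielding hereditary just-infiniteness. \emph{Uniformity} (powerful, finitely generated, torsion-free) follows because the lattices are free $\bb{Z}_p$-modules of rank $3$ and, after passing to a high enough $p$-adic scaling, $[L,L]\subseteq p\,L$, which translates under the Lazard correspondence to $[G,G]\les G^{p}$; torsion-freeness is automatic for the group attached to a $\bb{Z}_p$-Lie lattice.

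Finally, for the \emph{infinitely many pairwise non-isomorphic} clause, I would argue that the isomorphism type of the uniform group determines, via its abelianization, the triple of $s$-invariants; hence two groups with different $s$-invariant triples are non-isomorphic. Since for each target lower bound I have a whole two-parameter family $L_{a,b}$ (or at least infinitely many parameter choices producing distinct $s$-invariant triples all with $k(G)>d$), I obtain infinitely many pairwise non-isomorphic groups in the family, completing the proof. The step I expect to require the most care is the precise bookkeeping translating the lattice data $(a,b)$ into the ordered $s$-invariants $s_0\les s_1\les s_2$ and confirming that the minimum of the two half-gaps can genuinely be driven above $d$ while all three structural properties persist; this is where one must be sure the Lazard correspondence is being applied in a range where it is valid (e.g.\ $p\ges 3$ together with the dimension bound guaranteeing saturability) and that the abelianization computation on the Lie side matches that on the group side.
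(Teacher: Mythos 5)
There is a genuine gap: you never verify that your groups are \emph{self-similar}. The corollary asserts the existence of self-similar groups of large self-similarity index, so it is not enough to produce groups $G$ with $k(G)>\log_p d$ — you must also show $\sigma(G)<\infty$, i.e.\ that $G$ admits \emph{some} faithful self-similar action transitive on the first level. Your proposal explicitly declares that ``the main task is not the index bound itself\dots but the existence of a rich supply of uniform, unsolvable, hereditarily just-infinite'' groups with spread-out $s$-invariants, and then addresses only those qualitative properties. But Theorem \ref{main-group} only gives a \emph{lower} bound on $\sigma$; it says nothing about finiteness. This is exactly the nontrivial half of the paper's Example \ref{efamily}: for the lattice $L_l$ with relations $[x_1,x_2]=p^2x_0$, $[x_2,x_0]=p^{2l+2}x_1$, $[x_0,x_1]=-p^{4l+2}x_2$, the authors exhibit an explicit finite-index subalgebra $M=\gen{2x_0,\ p^lx_1-p^{2l}x_2,\ p^lx_1+p^{2l}x_2}$ whose structure constants all have equal valuation, invoke \cite[Lemma 2.11]{NS2019} to see that $M$ is self-similar, and then \cite[Lemma 2.1]{NS2019} to conclude $L_l$ is. Without some such argument your groups might a priori fail to be self-similar at any index, and then they are not examples for the corollary at all.

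A secondary problem is that your concrete lattice family does not do what you claim. Taking $L_{a,b}=\gen{p^ae,\,p^bf,\,h}\subseteq\mfr{sl}_2(\bb{Z}_p)$ gives brackets $[p^ae,p^bf]=p^{a+b}h$, $[h,p^ae]=2p^ae$, $[h,p^bf]=-2p^bf$, so for $p\ges 3$ the $s$-invariants are $0,0,a+b$; hence $\min\bigl((s_1-s_0)/2,\ (s_2-s_1)/2\bigr)=0$ and $k(G)=0$, and Theorem \ref{main-group} yields only the trivial bound $\sigma(G)\ges 1$. The same degeneracy persists if you also rescale $h$: in the $e,f,h$ basis two of the three structure constants always have equal valuation. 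To spread all three $s$-invariants you need the cyclic (diagonalizing) form $[x_i,x_{i+1}]=a_{i+2}x_{i+2}$, where independent rescalings $x_i\mapsto p^{n_i}x_i$ move the three valuations independently — which is precisely how the paper's $L_l$ is built. Your hedging phrase ``or a similar prescription'' does not repair this; as written, the construction produces groups for which your own lower bound is vacuous. The remaining points (hereditary just-infiniteness via simplicity of $L\otimes\bb{Q}_p$, non-isomorphism via distinct abelianizations, saturability/uniformity) are fine and consistent with the paper's route through Lemma \ref{l1} and the Lazard correspondence.
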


We close the Introduction by observing that Theorem \ref{main-group} provides
further evidence for \cite[Conjecture E]{NS2019}.

\vspace{5mm}

\noindent
\textbf{Notation.} The set $\bb{N}=\{0,1,2,...\}$ of natural numbers is assumed to contain $0$. 
We denote by $v_p$ the $p$-adic valuation on the field $\bb{Q}_p$ of $p$-adic numbers.
A $\bb{Z}_p$-Lie lattice is a Lie algebra over the ring $\bb{Z}_p$ of $p$-adic integers
the underlying $\bb{Z}_p$-module of which is finitely generated and free.
The submodule generated by the elements $x_1,...,x_n$ of  some $\bb{Z}_p$-module
will be denoted by $\gen{x_1,...,x_n}$.

\section{Results on Lie lattices}

This section contains the main technical result of the paper, Theorem \ref{nsshigh},
which is a result on non-self-similarity of $\bb{Z}_p$-Lie lattices. For the ease 
of the reader, we recall some definitions and notations (see \cite{NS2019}
for more details).
Let $L$ be a $\bb{Z}_p$-Lie lattice, and let $k\in\bb{N}$.

A virtual endomorphism of $L$ of index $p^k$ is an algebra morphism $\varphi:M\rar L$
where $M$ is a subalgebra of $L$ of index $p^k$.
An ideal $I$ of $L$ is said to be $\varphi$-invariant if
$I\subseteq M$ and $\varphi(I)\subseteq I$.
We say that a virtual endomorphism $\varphi$ is simple if there are no nonzero ideals $I$
of $L$ that are $\varphi$-invariant.

We say that $L$ is \textbf{self-similar of index} $\bm{p^k}$ if there exists
a simple virtual endomorphism of $L$ of index $p^k$.
A \textbf{self-similar} $\bb{Z}_p$-Lie lattice is a $\bb{Z}_p$-Lie lattice which is 
self-similar of index $p^l$ for some $l\in\bb{N}$.
In case $L$ is self-similar we denote the minimum index of self-similarity 
by $\sigma(L)$, and we call it the \textbf{self-similarity index} of $L$.
In case $L$ is not self-similar we put $\sigma(L)=\infty$. 

Assume that  $L$ is \textit{3-dimensional} and \textit{unsolvable}.
Then $L$ has finite abelianization, so there exist unique numbers
$s_0,s_1,s_2\in \bb{N}$ such that $s_0 \les s_1 \les s_2$ and
$L/[L,L]\simeq \bigoplus_{i=0}^2 (\bb{Z}/p^{s_i}\bb{Z})$; 
these numbers are  called the $\bm{s}$\textbf{-invariants} 
of $L$. 
Recall from \cite[Definition 2.4]{NS2019} 
that a basis $(x_0,x_1,x_2)$ of $L$ is called a \textbf{diagonalizing basis}
if $[x_{i},x_{i+1}]=a_{i+2}x_{i+2}$ for some $a_0,a_1,a_2\in\bb{Z}_p$, where
the index $i\in\{0,1,2\}$ is interpreted modulo 3; moreover, such a basis is called 
well diagonalizing
if $v_p(a_0)\les v_p(a_1)\les v_p(a_2)$, in which case $s_i:=v_p(a_i)$, $i\in\{0,1,2\}$, 
are the $s$-invariants of $L$. 
We also recall that if $L$ admits a diagonalizing basis then it admits a 
well diagonalizing one (see \cite[Remark 2.5]{NS2019}).

\begin{theorem}\label{nsshigh}
Let $p$ be a prime, let $L$ be a 3-dimensional unsolvable $\bb{Z}_p$-Lie lattice,
and let $s_0\les s_1\les s_2$ be the $s$-invariants of $L$.
Assume that $L$ admits a diagonalizing basis,
and define
$$ K = \left\lceil \mr{min}\left( \frac{s_1-s_0}{2},\frac{s_2-s_1}{2} \right)\right\rceil. $$
Then $\sigma(L) \ges p^K$.
\end{theorem}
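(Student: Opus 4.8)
The plan is to show that any virtual endomorphism of $L$ of index $p^k$ with $k < K$ must preserve a nonzero ideal, hence cannot be simple. First I would fix a well diagonalizing basis $(x_0,x_1,x_2)$, which exists by the cited Remark~2.5, so that $[x_i,x_{i+1}]=a_{i+2}x_{i+2}$ with $v_p(a_i)=s_i$ and $s_0\les s_1\les s_2$. The key structural observation is that the derived subalgebra and the lower-central-type filtration of $L$ are controlled by these valuations: for instance $[L,L]=\gen{a_0x_0,a_1x_1,a_2x_2}$ sits at a predictable depth, and the successive ideals one obtains by repeatedly bracketing pick up powers of $p$ governed by the gaps $s_1-s_0$ and $s_2-s_1$. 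I would make this precise by identifying, for each admissible index $p^k$, a canonical nonzero ideal $I_k$ (built from the basis vectors scaled by suitable powers of $p$) that is forced to be $\varphi$-invariant once $k$ is small relative to $K$.

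The heart of the argument is a divisibility/valuation estimate. Given a subalgebra $M\les L$ of index $p^k$ and a morphism $\varphi:M\to L$, I would track how $\varphi$ interacts with the bracket. The relation $\varphi([u,v])=[\varphi(u),\varphi(v)]$ together with the fact that $[L,L]$ lives in a sublattice whose coordinates are divisible by $p^{s_0},p^{s_1},p^{s_2}$ forces $\varphi$ to be ``almost diagonal'' up to bounded $p$-adic denominators. Concretely, I expect to show that the image under $\varphi$ of the deepest basis direction (the $x_2$-line, associated to the largest invariant $s_2$) is trapped: its coordinates in the $x_0,x_1$ directions must have $p$-valuation at least something like $s_2-s_1-2k$ or $s_1-s_0-2k$, and when $2k < \min(s_1-s_0,s_2-s_1)$ — i.e. $k<K$ — these valuations are positive, which pins $\varphi(x_2)$ inside a proper scaled sublattice. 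That trapped direction generates the desired nonzero $\varphi$-invariant ideal.

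I would then verify invariance directly: the candidate ideal $I$ is an ideal of $L$ by the bracket relations (a $\mathbb{Z}_p$-span of scaled basis vectors closed under bracketing with $L$, using the valuation ordering $s_0\les s_1\les s_2$), it is contained in $M$ because the index-$p^k$ constraint with $k<K$ is too weak to push it out of $M$, and $\varphi(I)\subseteq I$ by the coordinate valuation bounds from the previous step. Since $I\neq 0$, no simple virtual endomorphism of index $p^k$ with $k<K$ exists, giving $\sigma(L)\ges p^K$.

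The main obstacle I anticipate is the bookkeeping in the valuation estimate: controlling the $p$-adic denominators introduced by passing from $M$ back to $L$ (since $\varphi$ lands in $L$ but is only defined on the index-$p^k$ sublattice $M$), and showing the factor of $2$ in $K=\lceil\min((s_1-s_0)/2,(s_2-s_1)/2)\rceil$ is exactly what emerges. The $2$ should arise because invariance of the ideal under $\varphi$ involves bracketing \emph{twice} (or equivalently applying $\varphi$ and comparing against a double-bracket), so each bracketing step consumes one gap $s_{i+1}-s_i$ and the index budget $p^k$ is spent against the sum; getting the ceiling and the strictness ($k<K$ versus $k\les K$) right will require care about whether the relevant inequalities are strict.
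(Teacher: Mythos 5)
Your high-level strategy---show that every virtual endomorphism $\varphi:M\rar L$ of index $p^k$ with $k<K$ preserves a nonzero ideal---is indeed the strategy of the paper, but the mechanism you propose for producing that ideal has a genuine gap, and the paper's actual mechanism is different. You want to constrain $\varphi$ itself, arguing that it is ``almost diagonal'' and that the image of the $x_2$-direction is trapped in a scaled sublattice. Two problems: first, $\varphi$ is only defined on $M$, so ``$\varphi(x_2)$'' need not make sense, and any rigidity statement would have to be proved for an arbitrary isomorphism between two \emph{different} index-$p^k$ subalgebras $M$ and $M'=\varphi(M)$, which is not substantiated by anything in your sketch; second, even granting that one coordinate direction is ``trapped,'' the ideal of $L$ it generates is automatically of finite index (by hereditary just-infiniteness, Lemma \ref{l1}), and $\varphi$-invariance of that ideal does not follow from controlling a single direction. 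Your proposal never actually produces the inclusion $\varphi(I)\subseteq I$.

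The paper's proof avoids controlling $\varphi$ altogether. After reducing to injective $\varphi$ (via Lemmas \ref{l1} and \ref{l2}---a reduction your sketch omits), it writes $M$ and $M'$ in Hermite normal form with diagonal entries $p^{k_i}$, $p^{k_i'}$, and computes that the $s$-invariants of $M$ are $t_i=s_i+k-2k_i$. The hypothesis $k<K$ enters exactly here: it forces $t_0<t_1<t_2$, so that matching the (ordered) $s$-invariants of the isomorphic algebras $M\simeq M'$ yields $k_i'=k_i$. This is also where the factor of $2$ comes from---the formula $t_i=s_i+k-2k_i$ for the invariants of an index-$p^k$ subalgebra---not from ``bracketing twice'' as you guess. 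The invariant ideal is then the \emph{canonically defined} lattice $I=p^{r_0}M+p^{r_1}[M,M]+\gamma_2(M)$ for suitable $r_0,r_1$ depending only on the $s_i$ and $k_i$: since its definition is intrinsic to $M$, one gets $\varphi(I)=I'$ for free, where $I'$ is the analogous ideal of $M'$, and the remaining (lengthy but elementary) computation shows $I=I'=\gen{p^{m_0}x_0,p^{m_1}x_1,p^{m_2}x_2}$ is an ideal of $L$. Without some such canonical construction, or an actual proof of your rigidity claim for $\varphi$, your argument does not close.
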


\begin{remark} \label{rdiag}
Recall from \cite[Proposition 2.7]{NS2019}
that if $p\ges 3$ then any 3-dimensional unsolvable $\bb{Z}_p$-Lie lattice
admits a diagonalizing basis.
\end{remark}

Before proving Theorem \ref{nsshigh}, we apply it to construct a
family of self-similar 3-dimensional $\bb{Z}_p$-Lie lattices with
unbounded self-similarity index. A $\bb{Z}_p$-Lie lattice $L$ is called
powerful if $[L,L]\subseteq p^{\vep}L$, 
where $\vep=1$ when $p\ges 3$,
and $\vep = 2$ when $p=2$.

\begin{example}\label{efamily}
We construct a sequence $L_l$, $l\in\bb{N}$, of self-similar 
powerful 3-dimensional unsolvable $\bb{Z}_p$-Lie lattices
such that $\sigma_l\rar \infty$ for $l\rar \infty$,
where $\sigma_l$ is the self-similarity index of $L_l$.
As $\bb{Z}_p$-module, define $L_l=\bb{Z}_p^3$, and
let $(x_0,x_1,x_2)$ be the canonical basis of $\bb{Z}_p^3$.
Let the bracket of $L_l$ be induced by the commutation
relations 
$[x_1,x_2] = p^2 x_0$, 
$[x_2,x_0] = p^{2l+2}x_1$ and
$[x_0,x_1] = -p^{4l+2}x_2$.
Then $L_l$ is a powerful unsolvable 3-dimensional $\bb{Z}_p$-Lie lattice
with $s$-invariants $s_0=2$, $s_1=2l+2$ and $s_2=4l+2$.
From Theorem \ref{nsshigh} it follows that 
$\sigma_l \ges p^l$, hence, $\sigma_l\rar\infty$ for $l\rar \infty$.
It remains to prove that $L_l$ is self-similar, for which it is enough to
prove that there exists a self-similar finite-index subalgebra $M$ of $L$
(see \cite[Lemma 2.1]{NS2019}). Define 
$y_0=2x_0$, 
$y_1= p^lx_1-p^{2l}x_2$ and 
$y_2=p^lx_1+p^{2l}x_2$.
A straightforward computation gives
$[y_1,y_2] = p^{3l+2} y_0$, 
$[y_2,y_0] = 2p^{3l+2}y_2$ and
$[y_0,y_1] = 2p^{3l+2}y_1$.
Then $M:=\gen{y_0,y_1,y_2}$ is a finite-index subalgebra of $L$.
From \cite[Lemma 2.11]{NS2019} it follows that $M$ is self-similar, as desired.
\end{example}

\begin{corollary}\label{corlie}
For all primes $p$ and all integers $d$ there exist infinitely many
pairwise non-isomorphic self-similar powerful 3-dimensional unsolvable
$\bb{Z}_p$-Lie lattices with self-similarity index greater than $d$.
\end{corollary}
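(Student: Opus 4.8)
The plan is to deduce Corollary~\ref{corlie} directly from the construction in Example~\ref{efamily}, whose verifications I am free to assume. The family $L_l$, $l\in\bb{N}$, already supplies self-similar powerful $3$-dimensional unsolvable $\bb{Z}_p$-Lie lattices with $\sigma_l\ges p^l\rar\infty$. Thus, for any fixed integer $d$, choosing $l$ large enough that $p^l>d$ yields lattices with self-similarity index greater than $d$. The only genuine content beyond the example is extracting \emph{infinitely many pairwise non-isomorphic} such lattices, so the heart of the argument is an invariant that distinguishes the $L_l$ from one another.

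The natural invariant to use is the triple of $s$-invariants, which by definition depends only on the isomorphism type of the lattice (it is read off from $L/[L,L]$). From Example~\ref{efamily} the lattice $L_l$ has $s$-invariants $(s_0,s_1,s_2)=(2,\,2l+2,\,4l+2)$, and these triples are manifestly distinct for distinct values of $l$. Hence the $L_l$ with $l$ ranging over any infinite set are pairwise non-isomorphic. Combining this with the lower bound $\sigma_l\ges p^l$ from Theorem~\ref{nsshigh}, the subfamily $\{L_l : p^l>d\}$ is an infinite set of pairwise non-isomorphic self-similar powerful $3$-dimensional unsolvable $\bb{Z}_p$-Lie lattices, each of self-similarity index greater than $d$.

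Concretely, I would first fix $d$ and set $l_0$ to be any integer with $p^{l_0}>d$. For every $l\ges l_0$ the example furnishes a lattice $L_l$ that is self-similar, powerful, $3$-dimensional, and unsolvable, with $\sigma_l\ges p^l\ges p^{l_0}>d$. Since the $s$-invariants $(2,2l+2,4l+2)$ differ as $l$ varies, no two of these lattices are isomorphic, and there are infinitely many values $l\ges l_0$. This establishes the corollary for each fixed $d$ and each prime $p$.

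I do not anticipate a serious obstacle here: the corollary is essentially a packaging of Example~\ref{efamily}, and the bulk of the work (constructing $M$, checking self-similarity and powerfulness, computing the $s$-invariants) has already been carried out in the example. The one point that must be stated carefully is the appeal to the $s$-invariants as an isomorphism invariant: since they are defined via the abelianization $L/[L,L]$, any lattice isomorphism preserves them, so distinct triples force non-isomorphic lattices. Everything else is a direct citation of the preceding results.
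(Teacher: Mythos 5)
Your proposal is correct and takes essentially the same route as the paper, which simply cites Example \ref{efamily}; your explicit use of the $s$-invariants $(2,2l+2,4l+2)$ to certify pairwise non-isomorphism is a welcome detail that the paper leaves implicit.
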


\begin{proof}
The result follows from Example \ref{efamily}.
\end{proof}

\vspace{5mm}

The following lemmas will be applied in the proof of Theorem \ref{nsshigh},
which is given at the end of the section. Recall that a 
$\bb{Z}_p$-Lie lattice $L$ 
is said to be just infinite if it is infinite and all
nonzero ideals of $L$ have finite index in $L$.
A $\bb{Z}_p$-Lie lattice is said to be
hereditarily just infinite if all of its finite-index subalgebras
are just infinite.

\begin{lemma}\label{l1}
Let $L$ be a 3-dimensional unsolvable $\bb{Z}_p$-Lie lattice
that admits a diagonalizing basis.
Then $L$ is hereditarily just infinite.
\end{lemma}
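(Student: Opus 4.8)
The plan is to prove that a 3-dimensional unsolvable $\bb{Z}_p$-Lie lattice $L$ admitting a diagonalizing basis is hereditarily just infinite, that is, every finite-index subalgebra is just infinite. Since any finite-index subalgebra of such an $L$ is itself a 3-dimensional unsolvable $\bb{Z}_p$-Lie lattice, and since finite index forces it to again admit a diagonalizing basis (or at least to retain enough structure), it suffices to show the following single statement: any 3-dimensional unsolvable $\bb{Z}_p$-Lie lattice $L$ that admits a diagonalizing basis is itself just infinite. First I would observe that $L$, being free of rank $3$ over $\bb{Z}_p$, is automatically infinite, so the whole content is that every nonzero ideal has finite index.

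The key reduction is to argue that a nonzero ideal $I\trianglelefteqslant L$ has finite index if and only if $I\otimes_{\bb{Z}_p}\bb{Q}_p$ is all of $L\otimes_{\bb{Z}_p}\bb{Q}_p$; equivalently, $I$ fails to have finite index precisely when it has $\bb{Z}_p$-rank $1$ or $2$. Thus I would pass to the $\bb{Q}_p$-Lie algebra $\mfr{g}:=L\otimes_{\bb{Z}_p}\bb{Q}_p$ and show that $\mfr{g}$ is \emph{simple} as a $\bb{Q}_p$-Lie algebra. The point is that any nonzero ideal $I$ of $L$ spans a nonzero ideal $I\otimes\bb{Q}_p$ of $\mfr{g}$; if $\mfr{g}$ is simple this span must be all of $\mfr{g}$, which forces $I$ to have full rank $3$ and hence finite index in $L$. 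So the problem becomes: a 3-dimensional unsolvable $\bb{Q}_p$-Lie algebra with a diagonalizing basis is simple.

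To establish simplicity of $\mfr{g}$, I would use the diagonalizing basis $(x_0,x_1,x_2)$ with relations $[x_i,x_{i+1}]=a_{i+2}x_{i+2}$ (indices mod $3$), where unsolvability forces all $a_i\neq 0$ (if some $a_i=0$ the derived algebra would drop rank and the lattice would be solvable). Over $\bb{Q}_p$ one may rescale the basis to arrange the structure constants in a convenient normal form; the relations then show that $[\mfr{g},\mfr{g}]=\mfr{g}$, so $\mfr{g}$ is perfect and in particular not solvable. For a $3$-dimensional Lie algebra, being perfect already rules out every proper nonzero ideal: a $1$-dimensional ideal $\langle v\rangle$ would have to satisfy $[\mfr{g},v]\subseteq\langle v\rangle$, making $v$ a common eigenvector of all $\mr{ad}(x_i)$, which a short check against the bracket relations excludes; a $2$-dimensional ideal $J$ would give a $1$-dimensional quotient $\mfr{g}/J$, contradicting $\mfr{g}=[\mfr{g},\mfr{g}]$ since the quotient of a perfect algebra is perfect but a $1$-dimensional Lie algebra is abelian. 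Hence $\mfr{g}$ has no proper nonzero ideals and is simple.

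The main obstacle, and the only place requiring genuine care, is the $1$-dimensional ideal case: ruling out a common eigenvector for the three operators $\mr{ad}(x_i)$. I expect to handle this by writing an arbitrary $v=\sum c_i x_i$, imposing $[x_j,v]\in\langle v\rangle$ for each $j$, and deriving from the nonvanishing of the $a_i$ that the only solution is $v=0$; the bracket relations pair up the coordinates so that the eigenvector conditions become a small homogeneous system whose only solution over $\bb{Q}_p$ is trivial, precisely because unsolvability guarantees none of the structure constants vanishes. Once simplicity of $\mfr{g}$ is in hand, the descent back to $L$ is routine: every nonzero ideal spans $\mfr{g}$, hence has full rank and finite index, giving that $L$ is just infinite; applying this to each finite-index subalgebra (all of which are again $3$-dimensional, unsolvable, and diagonalizable) yields that $L$ is hereditarily just infinite.
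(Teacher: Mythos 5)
Your proposal is correct, but it takes a genuinely different route from the paper. The paper works entirely inside the lattice: given a finite-index subalgebra $M$ with $p^kL\subseteq M$ and a nonzero $y=b_0x_0+b_1x_1+b_2x_2$ in an ideal $I$ of $M$ (say $b_0\neq 0$), it forms the explicit iterated commutators $[[y,p^kx_1],p^kx_0]$ and $[[y,p^kx_2],p^kx_0]$, checks that these are nonzero multiples of $x_1$ and $x_2$, and brackets them to get a nonzero multiple of $x_0$, so $I$ has rank $3$. You instead tensor with $\bb{Q}_p$ and show that $\mfr{g}=L\otimes_{\bb{Z}_p}\bb{Q}_p$ is simple (perfect, hence no $2$-dimensional ideal; no common eigenvector, hence no $1$-dimensional ideal), after which every nonzero ideal of every finite-index subalgebra $M$ spans an ideal of $M\otimes\bb{Q}_p=\mfr{g}$ and therefore has full rank. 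Both arguments are sound; yours is more conceptual and in fact reveals that the diagonalizing-basis hypothesis is essentially superfluous here (a $3$-dimensional unsolvable Lie algebra over a field is automatically simple, since a $1$- or $2$-dimensional ideal would exhibit the algebra as an extension of solvable algebras), while the paper's is more elementary and produces explicit elements of the ideal. One small caution: your opening reduction asserts that every finite-index subalgebra again admits a diagonalizing basis, which you do not justify (and which for $p=2$ is not covered by the cited results); fortunately you do not need it, because simplicity of $\mfr{g}$ alone drives the hereditary conclusion, so you should phrase the reduction through $\mfr{g}$ rather than through re-diagonalizing each subalgebra.
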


\begin{proof}
Let $M$ be a finite-index subalgebra of $L$. Clearly, $M$ is infinite.
Let $I$ be a nonzero ideal of $M$. We have to show that $\mr{dim}\,I=3$.
There exists $k\in\bb{N}$ such that $p^kL\subseteq M$.
Let $(x_0,x_1,x_2)$ be a diagonalizing basis of $L$,
and let $[x_i,x_{i+1}]=a_{i+2}x_{i+2}$,
where the index $i\in\{0,1,2\}$ is interpreted modulo 3,
and $a_0,a_1,a_2$ are nonzero elements of $\bb{Z}_p$. 
We will show that $I$ contains nonzero multiples
of $x_i$ for any $i\in \{0,1,2\}$. From this, it follows that $I$ is 3-dimensional,
as desired.
Let $y = b_0x_0+b_1x_1+b_2x_2$ be a nonzero element of
$I$, where $b_0$, $b_1$ and $b_2$ are elements of $\bb{Z}_p$. 
Without loss of generality we can assume that $b_0\neq 0$.
Since $I$ is an ideal of $M$, and $p^kx_i\in M$ for $i\in\{0,1,2\}$,
the commutators $z_1 := [[y,p^kx_1],p^kx_0]$ and $z_2 := [[y,p^kx_2],p^kx_0]$ belong to $I$.
One easily checks that $z_j$ is a nonzero multiple of $x_j$ for $j\in\{1,2\}$.
Hence, $z_0:=[z_1,z_2]$ is a nonzero multiple of $x_0$ and belongs
to $I$ as well.
\end{proof}

\begin{lemma}\label{l2}
Let $L$ be a hereditarily just-infinite $\bb{Z}_p$-Lie lattice,
and let $\varphi:M\rar L$ be a virtual endomorphism of $L$. 
If $\varphi$ is simple then $\varphi$ is injective.
\end{lemma}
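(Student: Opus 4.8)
The plan is to prove the contrapositive: assuming $\varphi$ is not injective, I would produce a nonzero $\varphi$-invariant ideal of $L$, which shows that $\varphi$ is not simple. So suppose $K:=\ker\varphi\neq 0$. Since $M$ has finite index in $L$ it is again a $\bb{Z}_p$-Lie lattice, and the hereditary just-infiniteness of $L$ tells us that $M$ is itself just infinite. Then $K$, being a nonzero ideal of $M$, must have finite index in $M$; equivalently $\mr{dim}\,K=\mr{dim}\,M=\mr{dim}\,L$.

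The second step is to turn the full-rank kernel into a statement about the image. By the isomorphism theorem $\varphi(M)\cong M/K$, and since $K$ has finite index in $M$ this quotient is a finite $\bb{Z}_p$-module. On the other hand $\varphi(M)$ is a subalgebra of $L$, whose underlying module is free and hence torsion-free; as the only finite torsion-free $\bb{Z}_p$-module is trivial, I conclude $\varphi(M)=0$, that is, $\varphi$ is the zero map.

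The final step is easy once we know $\varphi\equiv 0$. Choosing $k$ with $p^kL\subseteq M$ (possible because $M$ has finite index), the submodule $p^kL$ is a nonzero ideal of $L$ contained in $M$, and $\varphi(p^kL)=0\subseteq p^kL$, so $p^kL$ is $\varphi$-invariant. This contradicts simplicity and completes the contrapositive.

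The crux of the argument — and the only place where the hypotheses really do work — is the first two steps taken together: hereditary just-infiniteness, applied to the finite-index subalgebra $M$, is exactly what promotes the mere nontriviality of $\ker\varphi$ to full rank, and the torsion-freeness of the lattice then forces the image to collapse. I do not expect any genuine obstacle beyond correctly invoking just-infiniteness for $M$ rather than for $L$; the production of the invariant ideal $p^kL$ afterwards is routine.
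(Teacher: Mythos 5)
Your proof is correct and follows essentially the same route as the paper: contrapositive, just-infiniteness of the finite-index subalgebra $M$ forcing $\ker\varphi$ to have finite index, and $p^kL$ as the resulting nonzero $\varphi$-invariant ideal. The only difference is a small detour: you first deduce $\varphi\equiv 0$ via torsion-freeness of the image, whereas the paper skips this by choosing $k$ so that $p^kL\subseteq\ker\varphi$ directly, which makes $\varphi$-invariance of $p^kL$ immediate.
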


\begin{proof}
By contrapositive, assume that $\varphi$ is not injective.
Since, by assumption, $M$ has finite index in $L$, $M$ is just infinite.
Since $\mr{ker}(\varphi)$ is a nonzero ideal of $M$, $\mr{ker}(\varphi)$
has finite index in $M$, hence, in $L$. Then there exists $k\in\bb{N}$
such that $p^kL\subseteq \mr{ker}(\varphi)$. Hence,
$p^kL$ is a nonzero (since $L$ is infinite) $\varphi$-invariant 
ideal of $L$. It follows that $\varphi$ is not simple.
\end{proof}
\\

The proof of the following lemma is straightforward, and it is left to the reader.

\begin{lemma}\label{lidcond}
Let $L$ be a 3-dimensional unsolvable $\bb{Z}_p$-Lie lattice.
Let $(x_0,x_1,x_2)$
be a basis of $L$ such that 
$[x_i,x_{i+1}]=a_{i+2}x_{i+2}$, where $a_i\in\bb{Z}_p\bs\{0\}$ and
the index $i\in\{0,1,2\}$ is interpreted modulo 3; define $s_i := v_p(a_i)$.
Let $m_i\in\bb{N}$ and define $I := \gen{p^{m_0}x_0, p^{m_1}x_1, p^{m_2}x_2}$.
Then $I$ is an ideal of $L$ if and only if $m_i+s_j\ges m_j$ for
all $i$ and $j$ in $\{0,1,2\}$.
\end{lemma}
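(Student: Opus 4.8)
The plan is to use the $\bb{Z}_p$-bilinearity of the bracket to reduce the ideal condition to a finite list of membership statements, each of which translates into a single valuation inequality. Since $I$ is by definition a $\bb{Z}_p$-submodule of $L$, and since the bracket is $\bb{Z}_p$-bilinear, $I$ is an ideal if and only if $[x_i,p^{m_j}x_j]\in I$ for all $i,j\in\{0,1,2\}$: indeed, $L$ is generated as a $\bb{Z}_p$-module by $x_0,x_1,x_2$ and $I$ is generated by $p^{m_0}x_0,p^{m_1}x_1,p^{m_2}x_2$, so bilinearity propagates membership from these generators to all of $[L,I]$. This reduction handles both directions of the equivalence at once, the forward direction being the specialization of the ideal property to these generators.

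Next I would record the relevant brackets. When $i=j$ the bracket vanishes and lies in $I$ trivially. When $i\neq j$, writing $k$ for the remaining index in $\{0,1,2\}$, the defining relations $[x_i,x_{i+1}]=a_{i+2}x_{i+2}$ together with antisymmetry give $[x_i,x_j]=\pm a_k x_k$, whence $[x_i,p^{m_j}x_j]=\pm p^{m_j}a_k x_k$. Because $(x_0,x_1,x_2)$ is a basis, an element $c\,x_k$ with $c\in\bb{Z}_p$ lies in $I$ precisely when $v_p(c)\ges m_k$. Applying this to $c=\pm p^{m_j}a_k$ and using $v_p(a_k)=s_k$, the membership $[x_i,p^{m_j}x_j]\in I$ is seen to be equivalent to the inequality $m_j+s_k\ges m_k$.

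Finally I would assemble the six inequalities obtained by letting $(i,j)$ range over the ordered pairs of distinct indices; for each such pair the third index $k$ is determined. A direct enumeration shows that, as $(i,j)$ runs over these six pairs, the resulting conditions $m_j+s_k\ges m_k$ are exactly the conditions $m_a+s_b\ges m_b$ over all ordered pairs $a\neq b$. The remaining cases $a=b$ of the claimed condition read $m_a+s_a\ges m_a$, i.e. $s_a\ges 0$, which hold automatically. Combining this with the reduction of the first paragraph yields that $I$ is an ideal if and only if $m_i+s_j\ges m_j$ for all $i,j\in\{0,1,2\}$. There is no genuine obstacle here; the only point demanding care is the bookkeeping of indices modulo $3$ when matching the six membership inequalities to the symmetric condition in the statement, and beyond that the argument is entirely formal.
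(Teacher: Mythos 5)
Your proof is correct and complete: the reduction by $\bb{Z}_p$-bilinearity to the generator brackets $[x_i,p^{m_j}x_j]=\pm p^{m_j}a_kx_k$, the translation of each membership into $m_j+s_k\ges m_k$ via the basis property, the enumeration matching these six inequalities with the ordered pairs $a\neq b$, and the observation that the diagonal cases reduce to $s_a\ges 0$ (automatic since $a_a\in\bb{Z}_p$) together settle both directions. The paper explicitly leaves this lemma's proof to the reader as straightforward, and your argument is precisely the routine verification the authors intended.
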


\begin{lemma}\label{ldiag}
Let the index $i$ take values in $\{0,1,2\}$.
Let $a_i\in\bb{Q}_p\bs\{0\}$ and define $s_i=v_p(a_i)$.
Consider the following parameters:
$$ 
k_1,k_2\in\bb{N},
\qquad
e,f,g\in \bb{Z}_p,
\qquad
r_0,r_1\in\bb{Z},$$
make the following definitions:
\begin{eqnarray*}
h &:=& eg-fp^{k_1}\\
a_3 &:=& a_0p^{2k_1+2k_2}+a_1e^2p^{2k_2}+a_2h^2\\
a_4 &:=& a_0a_1p^{2k_2}+a_0a_2g^2+ a_1a_2f^2\\
m_0 &:=& r_1+s_0+k_1+k_2\\ 
m_1 &:=& s_0+s_1+k_1+2k_2\\
m_2 &:=& r_0+k_2,
\end{eqnarray*}
and define
the $3\times 3$ matrix $W$ with
coefficients in $\bb{Q}_p$ by
$$
W := 
\left[
\begin{array}{l|l|l}
a_0p^{r_1+k_1+k_2} & 
a_0(a_1ep^{2k_2}+a_2gh) & 
fp^{r_0} \\
-a_1e p^{r_1+k_2} & 
a_1(a_0p^{k_1+2k_2}-a_2fh) &
gp^{r_0} \\
a_2hp^{r_1}&
-a_2(a_0gp^{k_1}+a_1ef)p^{k_2}&
p^{r_0+k_2} \\
\end{array}
\right].
$$
Assume that
\begin{enumerate}
\item $s_0+s_1+k_1+2k_2\les r_0\les s_0+s_2+k_1$.
\item $s_0+k_1+k_2\les r_1\les s_1-k_1+k_2$.
\item $v_p(a_3) = s_0 +2k_1+2k_2$.
\item $v_p(a_4) = s_0 +s_1+2k_2$.
\end{enumerate}
Then there exists $V\in GL_3(\bb{Z}_p)$ such that 
$WV = \mr{diag}(p^{m_0}, p^{m_1}, p^{m_2})$.
If, moreover, $s_0\ges 0$ then $m_i+s_j\ges m_j$ for
all $i$ and $j$ in $\{0,1,2\}$.
\end{lemma}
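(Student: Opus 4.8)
The plan is to produce $V$ explicitly as $V := (D^{-1}W)^{-1}$, where $D := \mr{diag}(p^{m_0},p^{m_1},p^{m_2})$. Everything then reduces to showing that $U := D^{-1}W$ lies in $GL_3(\bb{Z}_p)$: granting this, $V = U^{-1}\in GL_3(\bb{Z}_p)$ and $WV = (DU)U^{-1} = D$, which is exactly the asserted identity. I would check $U\in GL_3(\bb{Z}_p)$ by verifying separately that $U$ has entries in $\bb{Z}_p$ and that $\det U\in\bb{Z}_p^{\times}$. Before doing either, I would record two inequalities forced by the hypotheses: since an $r_0$ satisfying (1) and an $r_1$ satisfying (2) are given, the intervals there are nonempty, and comparing their endpoints yields $s_1\ges s_0+2k_1$ and $s_2\ges s_1+2k_2$ (in particular $s_0\les s_1\les s_2$). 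These two bounds drive all the valuation estimates below.

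For integrality, index rows and columns of $W$ by $\{0,1,2\}$, so that the $i$-th row of $U$ is $p^{-m_i}$ times the $i$-th row of $W$; the task is to show that every entry of the $i$-th row of $W$ has $p$-valuation at least $m_i$. This is a check of nine entries, using $e,f,g,h\in\bb{Z}_p$ (recall $h=eg-fp^{k_1}\in\bb{Z}_p$). The monomial entries are immediate: the $(0,0)$ and $(2,2)$ corners have valuation exactly $m_0$ and $m_2$, and for the positions $(0,2),(1,0),(1,2),(2,0)$ one reads off the valuation and compares with the relevant $m_i$ directly from (1) and (2). The three genuinely mixed entries, at positions $(0,1),(1,1),(2,1)$, are each a sum of two terms; for these I would bound the two terms separately, so that no cancellation can pull the valuation below $m_i$. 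Here the inequalities $s_1\ges s_0+2k_1$ and $s_2\ges s_1+2k_2$ are exactly what guarantees that the $a_2$-terms in the $(0,1)$ and $(1,1)$ entries and the $a_1$-term in the $(2,1)$ entry are still large enough.

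For the determinant the key identity is $\det W = p^{r_0+r_1}\,a_3 a_4$. The slick route, which I would take, is to observe that the three columns of $W$ are $p^{r_1}u$, then $\mr{diag}(-a_0,-a_1,-a_2)(u\times w)$, and $p^{r_0}w$, where $u=(a_0 p^{k_1+k_2},\,-a_1 e p^{k_2},\,a_2 h)$, $w=(f,\,g,\,p^{k_2})$, and $\times$ is the Euclidean cross product. The scalar triple product identity then gives $\det W = p^{r_0+r_1}\sum_i a_i c_i^2$ with $c:=u\times w$, and expanding $\sum_i a_i c_i^2$ and collapsing the cross terms via $h=eg-fp^{k_1}$ produces precisely $a_3 a_4$. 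Feeding in (3) and (4) yields $v_p(\det W)=r_0+r_1+v_p(a_3)+v_p(a_4)=m_0+m_1+m_2$, hence $v_p(\det U)=0$ and $\det U\in\bb{Z}_p^{\times}$; together with integrality this gives $U\in GL_3(\bb{Z}_p)$. I expect this determinant identity to be the main obstacle: the factorization $\sum_i a_i c_i^2=a_3 a_4$ is invisible until one recognizes the cross-product structure of the middle column, and a blind expansion of the $3\times 3$ determinant is unpleasant.

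For the final assertion, assume $s_0\ges 0$; combined with $s_0\les s_1\les s_2$ (again from nonemptiness of (1) and (2)) this gives $s_0,s_1,s_2\ges 0$. The inequalities $m_i+s_j\ges m_j$ then follow by a direct check: the diagonal cases $i=j$ are just $s_j\ges 0$, and each of the six off-diagonal cases reduces, after substituting the formulas for $m_0,m_1,m_2$ and using $k_1,k_2,s_0\ges 0$, to one of the four endpoint bounds furnished by (1) and (2).
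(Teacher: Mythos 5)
Your proposal is correct, and it takes a genuinely different route from the paper. The paper constructs $V$ explicitly by Gaussian elimination on columns: it checks that the off-diagonal entries of row $0$ of $W$ have valuation at least $m_0=v_p(W_{00})$, clears them with a unimodular $V_1$, computes the resulting matrix $W_1$ (whose $(1,1)$ entry turns out to be $a_1a_3p^{-k_1}$), repeats to get a lower-triangular $W_2$ with $(2,2)$ entry $a_0^{-1}a_1^{-1}a_4p^{r_0-k_2}$, and finishes by clearing the subdiagonal and scaling columns by units; the work is concentrated in verifying the intermediate matrices $W_1,W_2$. You instead prove $D^{-1}W\in GL_3(\bb{Z}_p)$ in one shot via the row-wise bounds $v_p(W_{ij})\ges m_i$ (which I checked all hold under hypotheses (1)--(2) together with $s_1\ges s_0+2k_1$ and $s_2\ges s_1+2k_2$, the same two derived inequalities the paper records at the outset) plus the determinant identity $\det W=p^{r_0+r_1}a_3a_4$. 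That identity is correct: your cross-product decomposition of the middle column is exactly right, and $\sum_i a_ic_i^2=a_3a_4$ follows from $h=eg-fp^{k_1}$ as you indicate (equivalently, it is the product formula for the diagonal quadratic form $\sum a_ix_i^2$ evaluated on $u$, $w$, and their twisted cross product). Note that the same identity is implicit in the paper, as the product of the diagonal entries of $W_2$; your route makes it explicit and avoids computing $W_1$ and $W_2$ entry by entry, at the cost of not producing $V$ in closed form (you get it only as $(D^{-1}W)^{-1}$, which is all the lemma requires). The final claim $m_i+s_j\ges m_j$ is handled by direct substitution in both treatments.
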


\begin{proof}
Observe that $s_0+2k_1\les s_1$ and $s_1+2k_2\les s_2$.
Also, $s_0+2k_1+k_2\les r_0-r_1\les s_2-k_2$.
We have $v_p(W_{00})= m_0$, $v_p(W_{01})\ges s_0+s_1+2k_2$
and $v_p(W_{02})\ges r_0$. Hence,
$v_p(W_{01})\ges m_0$ and $v_p(W_{02})\ges m_0$.
It follows that there exists $V_1\in GL_3(\bb{Z}_p)$ such
that $W_1:=WV_1$ is equal to
$$
W_1 = 
\left[
\begin{array}{l|l|l}
a_0p^{r_1+k_1+k_2} & 
0 & 
0 \\
-a_1e p^{r_1+k_2} & 
a_1a_3p^{-k_1}&
a_0^{-1}(a_0gp^{k_1}+a_1ef)p^{r_0-k_1}\\
a_2hp^{r_1}&
-a_2a_3gp^{-k_1-k_2}&
a_0^{-1}(a_0p^{k_1+2k_2}-a_2fh)p^{r_0-k_1-k_2}
\end{array}
\right].
$$
We have $v_p((W_1)_{11})=m_1$, 
$v_p((W_1)_{10})\ges r_1+ s_1+k_2$ and $v_p((W_1)_{12})\ges r_0$.
Hence, $v_p((W_1)_{10})\ges m_1$ and $v_p((W_1)_{12})\ges m_1$.
It follows that there exists $V_2\in GL_3(\bb{Z}_p)$ such
that $W_2:=W_1V_2$ is equal to
$$
W_2 = 
\left[
\begin{array}{l|l|l}
a_0p^{r_1+k_1+k_2} & 
0 & 
0 \\
-a_1e p^{r_1+k_2} & 
a_1a_3p^{-k_1}&
0\\
a_2hp^{r_1}&
-a_2a_3gp^{-k_1-k_2}&
a_0^{-1}a_1^{-1}a_4p^{r_0-k_2}
\end{array}
\right].
$$
We have $v_p((W_2)_{22})=m_2$,
$v_p((W_2)_{20})\ges r_1+s_2$ and $v_p((W_2)_{21})\ges s_0+s_2+k_1+k_2$.
Hence, $v_p((W_2)_{20})\ges m_2$ and $v_p((W_2)_{21})\ges m_2$.
The claim about $V$ follows.

The proof of the claim $m_i + 	s_j \ges m_j$ is straightforward and it is left to the reader.
\end{proof}

\vspace{5mm}

\noindent 
\textbf{Proof of Theorem \ref{nsshigh}.}
Let $k\in 	\bb{N}$ be such that $k< K$. We prove that $L$ is not self-similar of index $p^k$,
from which the theorem will follow.

Observe that $s_0+2k< s_1$ and $s_1+2k< s_2$;
in particular, $s_0< s_1< s_2$. Let $x=(x_0,x_1,x_2)$ 
be a well diagonalizing basis of $L$, and 
let $[x_i,x_{i+1}]=a_{i+2}x_{i+2}$, where $a_i\in\bb{Z}_p\bs\{0\}$ and
the index $i\in\{0,1,2\}$ is interpreted modulo 3. Then $s_i = v_p(a_i)$.
 
Let $M$ be a subalgebra of $L$ of index $p^k$, and 
let $\varphi: M\rar L$ be an algebra morphism. We have to show that
$\varphi$ is not simple. 
By Lemma \ref{l1} and Lemma \ref{l2} we can assume that $\varphi$ is injective.

We will exhibit a nonzero $\varphi$-invariant ideal $I$ of $L$,
from which it follows that $\varphi$ is not simple.
Let $M'=\varphi(M)$.
Then $M'$ is a subalgebra of $L$ of index $p^k$
(see \cite[Proposition 2.8]{NS2019}), and $\varphi$ induces an
isomorphism $M\rar M'$.
There exist $k_0,k_1,k_2,k_0',k_1',k_2'\in\bb{N}$
and $e,f,g,e',f',g'\in\bb{Z}_p$ such that 
$M= \gen{y_0,y_1,y_2}$ and $M'=\gen{y_0',y_1',y_2'}$,
where $y_j = \sum_i U_{ij}x_i$, 
$y_j' = \sum_i U_{ij}'x_i$ and
$$
U = 
\left[
\begin{array}{lll}
p^{k_0} & e & f \\
0 & p^{k_1} & g \\
0 & 0 & p^{k_2}\end{array}
\right]
\qquad\qquad
U' = 
\left[
\begin{array}{lll}
p^{k_0'} & e' & f' \\
0 & p^{k_1'} & g' \\
0 & 0 & p^{k_2'}\end{array}
\right].
$$
Note that $k_0+k_1+k_2 = k_0'+k_1'+k_2' =k$.

In this paragraph we consider the subalgebra $M$.
Define $a_3 = a_0p^{2k_1+2k_2}+a_1e^2p^{2k_2}+a_2h^2$
and $a_4 = a_0a_1p^{2k_2}+a_0a_2g^2+ a_1a_2f^2$,
where $h=eg-fp^{k_1}$.
From the assumptions we deduce 
$v_p(a_3) = s_0 +2k_1+2k_2$ and $v_p(a_4) = s_0 +s_1+2k_2$;
in particular $a_3$ and $a_4$ are nonzero.
It is not difficult to check that the entries of the matrix
$$ 
V = 
\left[
\begin{array}{l|l|l}
1 &
0 & 
0 \\
-a_3^{-1}(a_1ep^{2k_2}+a_2gh)p^{k_0}&
1&
0\\
a_3^{-1}a_2hp^{k_0+k_1}&
-a_{4}^{-1}a_2(a_0gp^{k_1}+a_1ef) & 
1
\end{array}
\right]
$$
are in $\bb{Z}_p$; hence, $V\in GL_3(\bb{Z}_p)$.
Define $z=(z_0,z_1,z_2)$ by $z_j = \sum_i V_{ij}y_i$;
hence, $z$ is a basis of $M$.
We have $z_j = \sum_i (UV)_{ij}x_i$,
and one can compute $UV$ and obtain
$$ 
UV = 
\left[
\begin{array}{l|l|l}
a_3^{-1}a_0p^{k_0+2k_1+2k_2} & 
a_{4}^{-1}a_0(a_1ep^{2k_2}+a_2gh) & 
f \\
-a_3^{-1}a_1ep^{k_0+k_1+2k_2} & 
a_{4}^{-1}a_1(a_0p^{k_1+2k_2}-a_2fh) & 
g\\
a_3^{-1}a_2hp^{k_0+k_1+k_2} & 
-a_{4}^{-1}a_2(a_0gp^{k_1}+a_1ef)p^{k_2} & 
p^{k_2}
\end{array}
\right].
$$
A straightforward but lengthy calculation yields that   
$[z_i,z_{i+1}]=c_{i+2}z_{i+2}$,
where 
$$
c_0 = a_3p^{-k},
\qquad
c_1 = a_3^{-1}a_4p^k,
\qquad
c_2 = a_0a_1a_2a_4^{-1}p^k.
$$
Observe that $c_i\in\bb{Z}_p$ since, by assumption, $M$ is a subalgebra of $L$.
Hence, $z$ is a diagonalizing basis of $M$, and 
the $p$-adic valuations $t_i:=v_p(c_i)$ are the $s$-invariants of $M$.
One can compute that $t_i = s_i+k-2k_i$. Observe that $0\les t_0<t_1< t_2$;
in particular, $z$ is a well diagonalizing basis of $M$.
Let $a_3 = u_3p^{s_0+2k_1+2k_2}$ and $a_4=u_4p^{s_0+s_1+2k_2}$
with $u_3,u_4\in\bb{Z}_p^*$. 
In order to get a matrix in a slightly simplified form,
we multiply the columns 0 and 1 of $UV$
by $u_3$ and $u_4$, respectively, getting the matrix
$$ 
W =
\left[
\begin{array}{l|l|l}
a_0p^{-s_0+k_0} & 
a_0(a_1ep^{2k_2}+a_2gh)p^{-s_0-s_1-2k_2} & 
f \\
-a_1ep^{-s_0+k_0-k_1} & 
a_1(a_0p^{k_1+2k_2}-a_2fh)p^{-s_0-s_1-2k_2} & 
g\\
a_2hp^{-s_0+k_0-k_1-k_2} & 
-a_2(a_0gp^{k_1}+a_1ef)p^{-s_0-s_1-k_2} & 
p^{k_2}
\end{array}
\right].
$$
Define $w = (w_0,w_1,w_2)$ by $w_j =\sum_i W_{ij}x_i$, and
note that $w$ is a well diagonalizing basis of $M$.
We have
$$ 
[M,M] = \gen{p^{t_0}w_0, p^{t_1}w_1,p^{t_2}w_2}
$$
and (see \cite[Lemma 2.13]{NS2019})
$$ 
\gamma_2(M) = \gen{p^{t_0+t_1}w_0, p^{t_0+t_1}w_1,p^{t_0+ t_2}w_2}.
$$
Let $r_0: = s_0+s_1+k_1+2k_2$ and $r_1 := s_0+k_1+k_2$, and define
$$I := p^{r_0}M+p^{r_1}[M,M]+\gamma_2(M).$$
We have
$I =\gen{p^{r_1+t_0}w_0, p^{t_0+t_1}w_1, p^{r_0}w_2}$
and $I\subseteq M$.
One way to prove the last equality is to observe that 
$r_0=t_0+t_1+k_1\ges t_0+t_1$ and $r_1 = t_0+k_0\ges t_0$,
and to check that the following inequalities hold:
$r_1+t_0\les r_0$, $r_1+t_0\les t_0+t_1$,
$t_0+t_1\les r_0$, $t_0+t_1\les r_1+t_1$,
$r_0\les r_1+t_2$ and $r_0\les t_0+t_2$.
Let $\widetilde{W}$ be the matrix the columns of which
are the coordinates with respect to $x$ of 
$p^{r_1+t_0}w_0$, $p^{t_0+t_1}w_1$ and $p^{r_0}w_2$, 
respectively. We have 
$$
\widetilde{W} = 
\left[
\begin{array}{l|l|l}
a_0p^{r_1+k_1+k_2} & 
a_0(a_1ep^{2k_2}+a_2gh) & 
fp^{r_0} \\
-a_1e p^{r_1+k_2} & 
a_1(a_0p^{k_1+2k_2}-a_2fh) &
gp^{r_0} \\
a_2hp^{r_1}&
-a_2(a_0gp^{k_1}+a_1ef)p^{k_2}&
p^{r_0+k_2} \\
\end{array}
\right].
$$
From Lemma \ref{ldiag} and Lemma \ref{lidcond}
it follows that 
$$I = \gen{p^{m_0}x_0, p^{m_1}x_1, p^{m_2}x_2}$$
and that $I$ is an ideal of $L$,
where $m_0 = r_1+s_0+k_1+k_2$, $m_1 = s_0+s_1+k_1+2k_2$
and $m_2 = r_0+k_2$.
Since $I$ is nonzero, 
 it suffices to show
that $I$ is $\varphi$-invariant. We know that $I\subseteq M$,
and we will show that $\varphi(I) = I$, which will
complete the  proof
of the theorem.

The same arguments as in the preceeding paragraph may be applied to
$M'$. In particular, the $s$-invariants of $M'$
are $t_i' = s_i+k-2k_i'$ and $0\les t_0'<t_1'<t_2'$.
Since $M'\simeq M$, the two subalgebras have the same $s$-invariants,
and because the $t_i$'s and the $t_i'$'s are ordered, 
we deduce that $t_i'=t_i$ for $i\in\{0, 1, 2\}$. Thus, $k_i'=k_i$ for $i\in\{0, 1, 2\}$.
Let 
$$I' := p^{r_0}M'+p^{r_1}[M',M']+\gamma_2(M').$$
Clearly, $\varphi(I) = I'$. Also, again from the arguments
of the preceeding paragraph applied to $M'$ (and from
$k_i'=k_i$), we have $I'= \gen{p^{m_0}x_0, p^{m_1}x_1, p^{m_2}x_2} =I$. 
Hence, $\varphi(I)=I$, as desired.
\ep

\section{Proof of the main theorem}

In \cite{Laz65} Lazard constructed  a pair of mutually inverse functors
between the category of saturable $p$-adic analytic pro-$p$ groups and the
category of saturable $\bb{Z}_p$-Lie lattices (see \cite[IV(3.2.6)]{Laz65};
see also \cite{DixAnaProP} and \cite{GSKpsdimJGT}). 
These functors commute with the forgetful
functors to the category of sets, and we refer to the isomorphism 
of categories that they define as Lazard's correspondence. 
Whenever $G$ is a saturable $p$-adic analytic pro-$p$ group, 
we denote by $L_G$ the associated saturable $\bb{Z}_p$-Lie lattice. 
Under Lazard's correspondence, uniform pro-$p$ groups correspond to powerful $\bb{Z}_p$-Lie lattices.

\begin{lemma}\label{lsinv}
Let $G$ be a saturable $p$-adic analytic pro-$p$ group, and 
let $L_G$ be the associated $\bb{Z}_p$-Lie lattice. 
Then $G/[G,G]=L_G/{[L_G,L_G]}$ as abelian groups.
\end{lemma}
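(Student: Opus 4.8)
The plan is to realize both objects on a common underlying set and read off the derived subgroup and the derived subalgebra through the Baker--Campbell--Hausdorff (BCH) formula. By Lazard's correspondence, which commutes with the forgetful functors to sets, $G$ and $L:=L_G$ share the same underlying set, the group law being $x\cdot y = x+y+\tfrac{1}{2}[x,y]+(\text{higher iterated brackets})$, with neutral element $0$ and inverse $-x$; here $+$ and $[\,\cdot\,,\,\cdot\,]$ are the operations of $L$, and the series converges because $L$ is saturable. The point I want to extract is that every term of the group commutator $x^{-1}\cdot y^{-1}\cdot x\cdot y = [x,y]+(\text{brackets of weight }\ges 3)$ is an iterated Lie bracket, hence lies in the derived subalgebra $[L,L]$.

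First I would record the easy inclusion together with the identification of the abelian structures. Since $[L,L]$ is a finitely generated $\bb{Z}_p$-submodule it is closed, and being a subalgebra it is closed under the BCH law, so it is a closed normal subgroup of $G$; as all group commutators lie in it, $[G,G]\subseteq [L,L]$. Moreover the displayed formula gives $x\cdot y\equiv x+y \pmod{[L,L]}$ for all $x,y$, so on the common quotient set $L/[L,L]$ the descended group law coincides with the descended addition. Thus, once the reverse inclusion is established, $G/[G,G]=G/[L,L]=L/[L,L]$ as abelian groups, which is the assertion.

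The crux is therefore the reverse inclusion $[L,L]\subseteq [G,G]$, that is, that the group commutators already generate, as a closed subgroup, the whole derived subalgebra and not merely a subgroup of it. I would obtain this from the valuation filtration $L_{\ges\nu}=\{x:\omega(x)\ges\nu\}$ attached to a saturating valuation $\omega$ of $L$: this filtration is at once the Lie filtration and the group filtration on the common set, and because $\omega([x,y])\ges\omega(x)+\omega(y)$ the group commutator and the Lie bracket induce one and the same map on the associated graded. Consequently $[G,G]$ and $[L,L]$ have the same image in every graded quotient $L_{\ges\nu}/L_{>\nu}$; since both are closed and this filtration is Hausdorff and complete, a successive-approximation argument upgrades the level-by-level agreement to the equality $[G,G]=[L,L]$. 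This completeness step is the main obstacle: the congruence $x^{-1}\cdot y^{-1}\cdot x\cdot y\equiv[x,y]\pmod{[L,[L,L]]}$ only exhibits brackets as group commutators up to deeper corrections, and one must use the \emph{valuation} filtration rather than the lower central series (which need not descend to $0$ when $L$ is unsolvable) to solve those corrections away.
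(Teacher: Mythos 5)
Your route is genuinely different from the paper's: the paper does not argue via BCH at all, but simply assembles results of Gonz\'alez-S\'anchez \cite{GSpsat} --- that $[G,G]$ is PF-embedded in $G$, that $[G,G]=[L_G,L_G]$ \emph{as sets} (Theorem B(4) there), and that for a PF-embedded subgroup $N$ one has $G/N=L_G/L_N$ as abelian groups (Theorem 4.5 there). What you are attempting is essentially a from-scratch proof of that Theorem B(4), and that is exactly where your argument has gaps.

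Concretely: (1) your ``easy inclusion'' already begs the question. The terms of the BCH series and of the group-commutator series are \emph{rational} multiples of iterated brackets, so for $x,y\in L$ a term such as $u_n(x,y)$ a priori lies only in $(\bb{Q}_p\otimes[L,L])\cap L$, the isolator of $[L,L]$ in $L$, which can strictly contain $[L,L]$. Saturability guarantees convergence of the series in $L$; it does not guarantee that the individual terms lie in the sublattice $[L,L]$. For the same reason it is not automatic that a Lie sublattice is closed under the BCH product --- that is precisely the content of the PF-embedded condition, and it is a theorem, not a formality, that $[L,L]$ enjoys it. So the claims that $[L,L]$ is a (normal) subgroup, that $[G,G]\subseteq[L,L]$, and that $x\cdot y\equiv x+y\pmod{[L,L]}$ all presuppose the result you are trying to prove. (2) The reverse inclusion is only asserted: that $[G,G]$ and $[L,L]$ have the same image in each graded quotient does not follow merely from the commutator and the bracket inducing the same map on $\mathrm{gr}(L)$, since $\mathrm{gr}([L,L])$ need not equal $[\mathrm{gr}(L),\mathrm{gr}(L)]$ (and similarly on the group side); and your successive-approximation step needs the correction terms both to increase strictly in valuation and to remain inside the object being approximated, which again rests on point (1). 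None of this is a wrong strategy --- it is the standard one --- but as written the proof assumes its two key technical inputs. The clean fix is to cite \cite[Theorems A, B(4) and 4.5]{GSpsat}, as the paper does, or else to carry out the potent-filtration argument in full.
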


\begin{proof}
By \cite[Theorem A and Proposition 2.1(3)]{GSpsat} 
we have that $[G,G]$ is PF-embedded
in $G$, and by \cite[Theorem B(4)]{GSpsat} we have $[G,G]=[L_G,L_G]$. Moreover, by \cite[Theorem 4.5]{GSpsat}
we have $[G,G]=L_{[G,G]}$ as sets, and $G/[G,G]=L_G/L_{[G,G]}$ as abelian groups.
Then $G/[G,G]=L_G/{[L_G,L_G]}$ as abelian groups.
\end{proof}
\\

\noindent
\textbf{Proof of Theorem \ref{main-group}.} 
Let $L_G$ be the $\bb{Z}_p$-Lie lattice associated with $G$,
and observe that $L_G$ has dimension 3.
By \cite[Proposition A]{NS2019} we have $\sigma(G)=\sigma(L_G)$,
and by \cite[Theorem B(4)]{GSpsat} we have that $L_G$ is unsolvable. From Remark \ref{rdiag}
we see that $L_G$ admits a diagonalizing basis,
and from Lemma \ref{lsinv} we see that the $s$-invariants of $G$ coincide with the ones 
of $L_G$, hence, the number $K$ defined from $L_G$ in the statement of 
Theorem \ref{nsshigh} is equal to $k(G)$. 
By applying Theorem \ref{nsshigh} we deduce that $\sigma(L_G)\ges p^{K}$,
and the desired conclusion $\sigma(G)\ges p^{k(G)}$ follows.
\\

\noindent
\textbf{Proof of Corollary \ref{cor4}.} 
For $l\in\bb{N}$, let $L_l$ and $\sigma_l$ be defined as in Example \ref{efamily},
and recall that $\sigma_l\rar \infty$ for $l\rar \infty$.
Let $G_l$ be the uniform pro-$p$ group associated with $L_l$.
Then, by \cite[Proposition A and Theorem 3.1]{NS2019}, $G_l$ is a self-similar 3-dimensional 
hereditarily just-infinite
uniform pro-$p$ group of self-similarity index $\sigma_l$.
The corollary follows.


\begin{footnotesize}

\providecommand{\bysame}{\leavevmode\hbox to3em{\hrulefill}\thinspace}
\providecommand{\MR}{\relax\ifhmode\unskip\space\fi MR }
\providecommand{\MRhref}[2]{%
  \href{http://www.ams.org/mathscinet-getitem?mr=#1}{#2}
}
\providecommand{\href}[2]{#2}

\end{footnotesize}

\vspace{5mm}

\noindent
Francesco Noseda\\
Mathematics Institute\\
Federal University of Rio de Janeiro\\
Av. Athos da Silveira Ramos, 149\\
21941-909, Rio de Janeiro, RJ\\
Brazil  \\
{\tt noseda@im.ufrj.br}\\

\noindent
Ilir Snopce \\
Mathematics Institute\\
Federal University of Rio de Janeiro\\
Av. Athos da Silveira Ramos, 149\\
21941-909, Rio de Janeiro, RJ\\
Brazil  \\
{\tt ilir@im.ufrj.br}\\

\end{document}